\documentclass[11pt]{35}
\usepackage{amsthm, amsmath, amssymb, amsbsy, amsfonts, latexsym, euscript}
\usepackage{calrsfs} 
\usepackage{graphicx}

\DeclareMathAlphabet{\varmathbb}{U}{pxsyb}{m}{n}

\def\leq{\leqslant}

\def\geq{\geqslant}
\def\phi{\varphi}

\def\kappa{\varkappa}

\newcommand{\D}{\mathrm{d}\kern0.2pt}%

\newcommand{\E}{\mathrm{e}\kern0.2pt} 
\newcommand{\ii}{\kern0.05em\mathrm{i}\kern0.05em}

\newcommand{\RR}{\mathbb{R}}%

\newtheorem{theorem}{\bf \indent Theorem}[section]
\newtheorem{proposition}{\bf \indent Proposition}[section]

\newtheorem{corollary}{\bf \indent Corollary}[section]

\theoremstyle{remark}

\newtheorem{definition}{\bf\indent Definition}[section]

\numberwithin{equation}{section}

\begin{document}

\noindent {\Large \bf Mean value properties of solutions to the Helmholtz \\[3pt]
and modified Helmholtz equations}

\vskip5mm

{\bf N. Kuznetsov}

\vskip-2pt {\small Laboratory for Mathematical Modelling of Wave Phenomena}
\vskip-4pt {\small Institute for Problems in Mechanical Engineering} \vskip-4pt
{\small Russian Academy of Sciences} \vskip-4pt {\small V.O., Bol'shoy pr. 61, St.
Petersburg, 199178} \vskip-4pt {\small Russian Federation} \vskip-4pt {\small
nikolay.g.kuznetsov@gmail.com}

\vskip7mm

\parbox{146mm} {\noindent Mean value properties of solutions to the $m$-dimensional
Helmholtz and modified Helm\-holtz equations are considered. An elementary
derivation of these properties is given; it involves the Euler--Poisson--Darboux
equation. Despite the similar form of these properties for both equations, their
consequences distinguish essentially. The restricted mean value property for
harmonic functions is amended so that a function, satisfying it in a bounded domain
of a special class, solves the modified Helmholtz equation in this domain.}

\vskip10pt

{\centering
\section{Introduction}
}

\noindent In this note, we consider mean value properties of solutions to the $m$-dimensional
Helmholtz and modified Helmholtz equations. These equations have the next level of
complexity after the Laplace equation $\nabla^2 u = 0$ ($\nabla = (\partial_1, \dots
, \partial_m)$ is the gradient operator and $\partial_i = \partial / \partial x_i$).
Indeed, the Helmholtz equation
\begin{equation}
\nabla^2 u + \lambda^2 u = 0 , \quad \lambda \in \RR \setminus \{0\} , \label{Hh}
\end{equation}
and the modified Helmholtz equation
\begin{equation}
\nabla^2 u - \mu^2 u = 0 , \quad \mu \in \RR \setminus \{0\} , \label{MHh}
\end{equation}
have the simplest possible additional terms which differ only by the sign of the
coefficient at $u$. Solutions of these equations are assumed to be real; indeed, the
results obtained for them can be extended to complex-valued ones by considering the
real and imaginary part separately.

Since \eqref{Hh} turns into \eqref{MHh} by putting $\lambda = \ii \mu$, it is
natural to expect that mean value properties of solutions to these equations have
similar form and this is really so. However, these properties are separated in some
sense by those of harmonic functions; moreover, when we turn to their consequences
such as various versions of maximum principle and Liouville's theorem the results
differ essentially for solutions of \eqref{Hh} and \eqref{MHh}. Like harmonic
functions solutions of these equations are real analytic functions, but  nonzero
constants, which are harmonic, are not their solutions.

A few words about early studies of mean value properties of solutions to equations
\eqref{Hh} and \eqref{MHh}. It was Weber, who first derived the mean value formula
for spheres for solutions to \eqref{Hh} in three dimensions; see his paper \cite{W1}
published in 1868. In two dimensions, he obtained the analogous theorem next year;
see \cite{W2}. However, the $m$-dimensional case was considered only in the
classical book \cite{CH} (pp.~288 and 289) on the basis of the Pizzetti's mean value
formula for polyharmonic functions \cite{Piz}. To the best author's knowledge, no
further studies of mean value properties for solutions to \eqref{Hh} had appeared
since publication of the first edition of \cite{CH} in 1937 until 2021, when the
mean value formula for balls and converse theorems for balls and spheres were
obtained in \cite{Ku1}.

Even less was published about mean value properties of solutions to equation
\eqref{MHh}. The mean value formula for spheres was derived by C.~Neumann in the
three-dimensional case; see his book \cite{NC}, Chapter~9, \S~3, published in 1896.
The $m$-dimensional formula for spheres appeared without proof in the Poritsky's
article \cite{Po} published in 1938, but its results had been obtained ten years
earlier being presented to the AMS in December 1928. Mistakingly, C.~Neumann's
formula is attributed to Weber in \cite{Po}. The mean value formula for balls and
converse theorems for balls and spheres were formulated in \cite{Ku1} by analogy to
those for solutions of the Helmholtz equation.

Since mean value properties of solutions to the Helmholtz equation were studied in
detail in the recent paper \cite{Ku1}, our attention is focussed primarily on the
modified Helmholtz equation in the present note.

\vskip10pt 

{\centering \section{New derivation of the mean value formulae} }

By $B_r (x) = \{ y : |y-x| < r \}$ we denote the open ball of radius $r$ centred at
$x \in \RR^m$; it is called admissible with respect to a domain $D \subset \RR^m$
provided $\overline{B_r (x)} \subset D$, whereas $\partial B_r (x)$ is called
admissible sphere in this case. Let $f$ be a function continuous on $D$, then
\[  M^\circ (f, x, r) = \frac{1}{|\partial B_r|} \int_{\partial B_r (x)} \!\!
f (y) \, \D S_y \ \ \mbox{and} \ \ M^\bullet (f, x, r) = \frac{1}{|B_r|} \int_{B_r
(x)} \!\! f (y) \, \D y
\]
are its mean values over admissible sphere and ball, respectively. Here $\D S$ is
the surface area measure, $|\partial B_r| = m \, \omega_m r^{m-1}$ is the area of
$\partial B_r$ and $|B_r| = \omega_m r^m$ is the volume of $B_r$; the volume of unit
ball is $\omega_m = 2 \, \pi^{m/2} / [m \Gamma (m/2)]$.

It is clear that these functions are continuous in $x$ and $r$; moreover, if $u \in
C^k (D)$, then its mean values are in the same class in $x$ and $r$. By continuity
we have that
\[ M^\bullet (f, x, 0) = M^\circ (f, x, 0) = f (x) ,
\]
whereas further identities for $M^\circ$ can be found in \cite{J}, Chapter IV.

\vskip7pt {\bf 2.1. The Euler--Poisson--Darboux equation.} Let us show that the
obvious relation
\begin{equation}
m \int_0^r t^{m-1} M^\circ (u, x, t) \, \D t = r^m  M^\bullet (u, x, r) \, ,
\label{M+M}
\end{equation}
where $u$ is a $C^2$-function, yields that
\begin{equation}
M^\circ_{rr} + (m-1) \, r^{-1} M^\circ_r = \nabla^2_x M^\circ \ \ \mbox{for} \ r > 0
\, , \label{EPD}
\end{equation}
which is referred to as the Euler--Poisson--Darboux equation.

Indeed, applying the Laplacian to both sides of \eqref{M+M}, we obtain
\[ m \, \omega_m \int_0^r t^{m-1} \nabla^2_x M^\circ (u, x, t) \, \D t = 
\int_{B_r (0)} \!\! \nabla^2_x u (x + y) \, \D y \, .
\]
By Green's first formula the last integral is equal to
\[ \int_{|y| = r} \!\! \nabla_x \, u (x+y) \cdot \frac{y}{r} \, \D S_y \, ,
\]
and changing variables this can be written as follows:
\[ r^{m-1} \frac{\partial}{\partial r} \int_{|y|=1} \!\! u (x + r y) \, 
\D S_y = m \, \omega_m r^{m-1} M_r^\circ (u, x, r) \, .
\]
Thus we arrive at the equality
\begin{equation}
r^{m-1} M_r^\circ (u, x, r) = \int_0^r t^{m-1} \nabla^2_x M^\circ (u, x, t) \, \D t
\label{M_r}
\end{equation}
Differentiation of this relation with respect to $r$ yields \eqref{EPD}.

\vskip7pt {\bf 2.2. Mean value formulae for spheres.} To distinguish a solution of
equation \eqref{Hh} from that of \eqref{MHh}, we denote the latter by $v$ in what
follows, keeping $u$ for a solution of \eqref{Hh}. Thus, from now on we write
equation \eqref{MHh} as follows:
\begin{equation}
\nabla^2 v - \mu^2 v = 0 , \quad \mu \in \RR \setminus \{0\} . \label{MHh+}
\end{equation}
As usual, $J_\nu$ and $I_\nu$ denote the Bessel function and the modified Bessel
function of order $\nu$, respectively. Now, we are in a position to prove the
following.

\begin{theorem}
Let $D$  be a domain in $\RR^m$. If $u$ and $v \in C^2 (D)$ solve equations
\eqref{Hh} and \eqref{MHh+}, respectively, then the mean value equalities for
spheres
\begin{equation}
M^\circ (u, x, r) =  a^\circ (\lambda r) \, u (x) \ \ \mbox{and} \ \ M^\circ (v, x,
r) = \widetilde a^\circ (\mu r) \, v (x) \label{MM}
\end{equation}
hold for every admissible ball $B_r (x)$. Here
\begin{equation}
a^\circ (\lambda r) = \Gamma \left( \frac{m}{2} \right) \frac{J_{(m-2)/2} (\lambda
r)}{(\lambda r / 2)^{(m-2)/2}} \ \ \mbox{and} \ \ \widetilde a^\circ (\mu r) =
\Gamma \left( \frac{m}{2} \right) \frac{I_{(m-2)/2} (\mu r)}{(\mu r / 2)^{(m-2)/2}}
\, ,
\label{a}
\end{equation}
respectively.
\end{theorem}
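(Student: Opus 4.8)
The plan is to turn each mean value identity into an ordinary differential equation in the radius $r$ by feeding the differential equation for $u$ (resp. $v$) into the Euler--Poisson--Darboux equation \eqref{EPD}. Fix $x \in D$ and put $R = \sup\{\rho > 0 : \overline{B_\rho (x)} \subset D\}$; since every admissible ball $B_r(x)$ has $r < R$, it is enough to prove \eqref{MM} for $0 \le r < R$. Write $g(r) = M^\circ (u, x, r)$. Differentiating under the integral sign---legitimate because $u \in C^2(D)$, and already used in Section~2.1---gives $\nabla^2_x M^\circ (u, x, t) = M^\circ (\nabla^2 u, x, t) = -\lambda^2 g(t)$ by \eqref{Hh}. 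Substituting this into \eqref{EPD} shows that $g$ solves the linear ODE
\[ g''(r) + \frac{m-1}{r}\, g'(r) + \lambda^2 g(r) = 0 , \qquad 0 < r < R , \]
while \eqref{M_r} reads $r^{m-1} g'(r) = -\lambda^2 \int_0^r t^{m-1} g(t)\,\D t$, whence $g'(r) \to 0$ as $r \to 0^+$; also $g(0) = u(x)$ by continuity.

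Next I would solve this ODE explicitly. The substitution $g(r) = (\lambda r)^{-(m-2)/2}\, w(\lambda r)$ converts it into Bessel's equation of order $\nu = (m-2)/2$ for $w$, whose solution space is spanned by $J_\nu$ and a second solution that grows like $r^{-(m-2)}$ (for $m \ge 3$) or like $\log r$ (for $m = 2$) as $r \to 0^+$. Since $g$ stays bounded near $r = 0$, only the $J_\nu$-branch can occur, so $g(r) = c\,(\lambda r)^{-(m-2)/2} J_\nu(\lambda r)$ for some constant $c$. Letting $r \to 0$ and using $J_\nu(s) \sim (s/2)^\nu/\Gamma(\nu + 1)$ together with $\Gamma(\nu + 1) = \Gamma(m/2)$ pins down $c$, and one obtains $g(r) = a^\circ(\lambda r)\, u(x)$ with $a^\circ$ exactly as in \eqref{a}. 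This is the first equality in \eqref{MM}.

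For a solution $v$ of \eqref{MHh+} the computation is verbatim the same with $\lambda^2$ replaced by $-\mu^2$: now $\nabla^2_x M^\circ(v,x,t) = \mu^2 M^\circ(v,x,t)$, the radial ODE becomes $g'' + (m-1) r^{-1} g' - \mu^2 g = 0$, the substitution $g(r) = (\mu r)^{-(m-2)/2} w(\mu r)$ turns it into the modified Bessel equation of order $(m-2)/2$, and the solution bounded at the origin is a multiple of $I_{(m-2)/2}$; normalising with $I_\nu(s) \sim (s/2)^\nu/\Gamma(\nu+1)$ gives the second equality in \eqref{MM}. Equivalently, one may obtain it from the first by the formal substitution $\lambda = \ii\mu$ together with the relation $J_\nu(\ii z) = \ii^\nu I_\nu(z)$. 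The only point requiring a little care is the behaviour at the regular singular point $r = 0$: one must argue that the smoothness of $M^\circ$ there forces $g$ onto the regular (modified) Bessel branch and fixes the normalising constant uniquely. Once this is granted, the remainder is routine manipulation of Bessel functions.
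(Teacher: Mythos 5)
Your proposal is correct and follows essentially the same route as the paper: both reduce the problem via the Euler--Poisson--Darboux equation \eqref{EPD} to the radial ODE $g'' + (m-1)r^{-1}g' \pm \lambda^2 g = 0$ with $g(0)=u(x)$, $g'(0)=0$, and then identify $g$ with the (modified) Bessel solution $a^\circ$ (resp.\ $\widetilde a^\circ$). The only difference is cosmetic---the paper verifies that the candidate $a^\circ(\lambda r)u(x)$ satisfies the same Cauchy problem and appeals to its uniqueness, whereas you integrate the ODE explicitly and discard the singular Frobenius branch by boundedness at $r=0$, which in fact makes explicit the uniqueness claim the paper leaves unjustified.
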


\begin{proof}
We begin with a solution $v$ of equation \eqref{MHh+} and then turn to the
corresponding result for $u$ solving \eqref{Hh}.

It is straightforward to show that $a (r) = \widetilde a^\circ (\mu r)$ is a unique
solution the following Cauchy problem:
\begin{equation}
a_{rr} + (m-1) \, r^{-1} a_r - \mu^2 a = 0 , \ \ a (0) = 1 , \ \ a_r (0) = 0 \, .
\label{g5}
\end{equation}
This follows by virtue of the relations
\begin{equation}
z I_{\nu+1} (z) + 2 \nu I_\nu (z) - z I_{\nu-1} (z) = 0 \, , \ \ \ [z^{-\nu} I_\nu
(z)]' = z^{-\nu} I_{\nu+1} (z) \, ; \label{diff}
\end{equation}
see \cite{Wa}, p. 79. In particular, the second one implies the second initial
condition.

For the function $w (r, x) = \widetilde a^\circ (\mu r) \, v (x) - M^\circ (v, x,
r)$, which is defined for all $x \in D$ and all $r \geq 0$ such that $\partial B_r
(x)$ is admissible, we have
\[ w (x, 0) = 0 \, , \quad w_r (x, 0) = 0 \, .
\]
The first initial condition is a consequence of the equalities $\widetilde a^\circ
(0) = 1$ and $M^\circ (v, x, 0) = v (x)$, whereas the second one follows in the
limit as $r \to 0$ from equation \eqref{EPD} multiplied by $r$. Moreover, equations
\eqref{EPD} and \eqref{g5} yield that
\[ w_{rr} + (m-1) \, r^{-1} w_r - \mu^2 w = 0 \ \ \mbox{for} \ r > 0 \, .
\]
Since the latter Cauchy problem has only a trivial solution, we obtain that the
second equality \eqref{MM} holds with the coefficient given by the second formula
\eqref{a}.

To obtain the first equality \eqref{MM} one has to repeat these considerations with
$a (r) = a^\circ (\lambda r)$ and $w (r, x) = a^\circ (\lambda r) \, u (x) - M^\circ
(u, x, r)$; moreover, the relations 
\begin{equation}
z J_{\nu+1} (z) - 2 \nu J_\nu (z) + z J_{\nu-1} (z) = 0 \, , \ \ \ [z^{-\nu} J_\nu
(z)]' = - z^{-\nu} J_{\nu+1} (z) \label{diff'}
\end{equation}
must be used instead of \eqref{diff}; see \cite{Wa}, pp. 45 and 66, where these
formulae are derived.
\end{proof}

For $m=3$ the mean value formulae for spheres have particularly simple form. Indeed,
\[ a^\circ (\lambda r) = \frac{\sin \lambda r}{\lambda r} \ \ \mbox{and} \ \ \widetilde
a^\circ (\mu r) = \frac{\sinh \mu r}{\mu r} \, ,
\]
because $J_{1/2} (z) = \sqrt{2 / (\pi z)} \sin z$ and $I_{1/2} (z) = \sqrt{2 / (\pi
z)} \sinh z$. For arbitrary $m$, we see that $a^\circ (\lambda r)$ and $\widetilde
a^\circ (\mu r) \to 1$ as $\lambda, \mu \to 0$, and so both equalities \eqref{MM}
recover in the limit the mean value theorem for spheres valid for harmonic
functions. However, the second formula \eqref{diff} shows that $\widetilde a^\circ
(\mu r)$ approaches unity from above, whereas $a^\circ (\lambda r)$ does this from
below according to the analogous differentiation formula for $J_\nu$.

\vskip7pt {\bf 2.3. Mean value formulae for balls.} An immediate consequence of
\eqref{MM} and \eqref{a} are mean value properties for balls.

\begin{corollary}
Let $D$  be a domain in $\RR^m$. If $u$ and $v \in C^2 (D)$ solve equations
\eqref{Hh} and \eqref{MHh+}, respectively, then the mean value equalities for balls
\begin{equation}
M^\bullet (u, x, r) =  a^\bullet (\lambda r) \, u (x) \ \ \mbox{and} \ \ M^\bullet
(v, x, r) = \widetilde a^\bullet (\mu r) \, v (x) \label{MM'}
\end{equation}
hold for every admissible ball $B_r (x)$. Here
\begin{equation}
a^\bullet (\lambda r) = \Gamma \left( \frac{m}{2} + 1 \right) \frac{J_{m/2} (\lambda
r)}{(\lambda r / 2)^{m/2}} \ \ \mbox{and} \ \ \widetilde a^\bullet (\mu r) = \Gamma
\left( \frac{m}{2} + 1 \right) \frac{I_{m/2} (\mu r)}{(\mu r / 2)^{m/2}}  \, ,
\label{a'}
\end{equation}
respectively.
\end{corollary}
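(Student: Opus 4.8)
The plan is to derive \eqref{MM'} directly from the sphere formulae \eqref{MM} by means of the identity \eqref{M+M}, which already expresses the ball mean as a weighted integral of the spherical means. Rewriting \eqref{M+M} in the form
\[ M^\bullet (v, x, r) = \frac{m}{r^m} \int_0^r t^{m-1} M^\circ (v, x, t) \, \D t \]
and inserting the second equality \eqref{MM}, one gets
\[ M^\bullet (v, x, r) = \left[ \frac{m}{r^m} \int_0^r t^{m-1} \widetilde a^\circ (\mu t) \, \D t \right] v (x) , \]
so it only remains to evaluate the bracketed factor and verify that it coincides with $\widetilde a^\bullet (\mu r)$ as given by \eqref{a'}. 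The reasoning for the Helmholtz solution $u$ is word for word the same.

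To compute the integral I would substitute the first formula \eqref{a} for $\widetilde a^\circ$, pass to the variable $z = \mu t$, and bring the integral to the shape $\mathrm{const} \cdot \mu^{-m} \int_0^{\mu r} z^{m/2} I_{(m-2)/2} (z) \, \D z$, the constant involving only $\Gamma (m/2)$ and a power of $2$. The key step is the recurrence $\bigl[ z^\nu I_\nu (z) \bigr]' = z^\nu I_{\nu - 1} (z)$ (the companion of the relation quoted in \eqref{diff}; see \cite{Wa}, p.~79) taken with $\nu = m/2$: it identifies $z^{m/2} I_{m/2} (z)$ as an antiderivative of the integrand. The boundary term at $z = 0$ drops out because $z^{m/2} I_{m/2} (z) = O (z^m)$ there, so the integral equals $(\mu r)^{m/2} I_{m/2} (\mu r)$.

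Substituting this back and tidying up the constants — here one uses $\Gamma (m/2 + 1) = (m/2) \, \Gamma (m/2)$ to absorb the factors of $2$ — produces precisely $\widetilde a^\bullet (\mu r) \, v (x)$ with $\widetilde a^\bullet$ as in \eqref{a'}. For equation \eqref{Hh} the same chain of equalities applies verbatim with $I$ replaced by $J$ throughout, now using $\bigl[ z^\nu J_\nu (z) \bigr]' = z^\nu J_{\nu - 1} (z)$ in place of the modified-Bessel recurrence. I do not anticipate any genuine difficulty: the proof is a single substitution followed by one integration via a Bessel recurrence, and the only point deserving a line of justification is the vanishing of the contribution of the origin, which follows from the standard small-argument behaviour of $I_\nu$ and $J_\nu$.
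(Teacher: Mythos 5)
Your proposal is correct and follows essentially the same route as the paper: both integrate the sphere formula \eqref{MM} against $t^{m-1}\,\D t$ (which is exactly relation \eqref{M+M}) and evaluate the resulting Bessel integral, the only cosmetic difference being that you derive the antiderivative identity $\int_0^X z^{m/2} I_{(m-2)/2}(z)\,\D z = X^{m/2} I_{m/2}(X)$ from the recurrence $[z^\nu I_\nu(z)]' = z^\nu I_{\nu-1}(z)$, whereas the paper quotes the equivalent tabulated formula \eqref{PBM}. (Minor slip: $\widetilde a^\circ$ is the \emph{second} formula in \eqref{a}, not the first.)
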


\begin{proof}
To obtain the first formula \eqref{MM'} with the coefficient given in \eqref{a'} it
suffices to write the first formula \eqref{MM} in the form
\[ m \, \omega_m a^\circ (\lambda \rho) \, u (x) = \int_{\partial B_1 (0)} \!\! u 
(x + \rho y) \, \D S_y \, ,
\]
multiply by $\rho^{m-1}$, and integrate with respect to $\rho$ within an admissible
ball. To this end formula 1.8.1.21, \cite{PBM}:
\begin{equation}
\int_0^x \!\! x^{1 + \nu} J_\nu (x) \, \D x = x^{1 + \nu} J_{\nu + 1} (x) \, , \ \
\Re \, \nu > -1 . \label{PBM}
\end{equation}
is helpful. Applying it with $\nu = (m-2)/2$ while integrating the first expression
\eqref{a}, the required result follows.

For obtaining the second formula \eqref{MM'} with the coefficient given in
\eqref{a'} one has to use the same procedure, but applying formula \eqref{PBM} with
Bessel functions changed to modified ones; see 1.11.1.5 in \cite{PBM}.
\end{proof}

As in the case of spheres, $a^\bullet (\lambda r)$ and $\widetilde a^\bullet (\mu
r)$ tend to unity as $\lambda, \mu \to 0$ (from below and above, respectively), thus
recovering in the limit the mean value theorem for balls valid for harmonic
functions.

\vskip10pt

{\centering \section{Applications of mean value properties} 
}

First we notice an interesting feature of the function $a^\circ (\lambda r)$ for
$m=2$. In this case, the sequence $\{ J_0 ( j_{0,n} |x| / r) \}_{n=1}^\infty$ (as
usual, $j_{\nu,n}$ denotes the $n$th positive zero of $J_\nu$) consists of
eigenfunctions satisfying equation \eqref{Hh} in $B_r (0)$ and the Dirichlet
boundary condition on $\partial B_r (0)$ provided $\lambda^2$ is determined from the
relation $\lambda r = j_{0,n}$. Since $J_0 (0) = 1$, the validity of the mean value
formula for $\partial B_r (0)$ for each of these eigenfunctions is guaranteed by the
equality $a^\circ (j_{0,n}) = 0$. On the other hand, all other Dirichlet
eigenfunctions vanish at the origin, and so the mean value formula holds for them
irrespective of the value attained by $a^\circ$.

Similarly, the sequence $\{ J_0 ( j_{1,n} |x| / r) \}_{n=1}^\infty$ consists of
eigenfunctions satisfying equation \eqref{Hh} in $B_r (0)$ and the Neumann boundary
condition on $\partial B_r (0)$ provided $\lambda^2$ is determined from the relation
$\lambda r = j_{1,n}$; notice that $J'_0 (z) = - J_1 (z)$. Each of these
eigenfunctions has the zero mean value for $B_r (0)$, being orthogonal to a nonzero
constant---the Neumann eigenfunction corresponding to $\lambda^2 = 0$. Despite the
equality $J_0 (0) = 1$, the mean value formula for $B_r (0)$ holds for each of these
eigenfunctions; this time because $a^\bullet (j_{1,n}) = 0$. As above, all Neumann
eigenfunctions without central symmetry vanish at the origin, and so the mean value
formula for $B_r (0)$ holds for them irrespective of the value attained by
$a^\bullet$.

In the remaining part of this section, the results obtained in \S~2 are used for
proving several properties of solutions to equations \eqref{Hh} and \eqref{MHh+}.

The second formula \eqref{diff'} implies that $a^\circ$ decreases monotonically on
the interval $(0, j_{m/2,1})$; also, it is positive on the smaller interval $(0,
j_{(m-2)/2,1})$ and changes sign at $\lambda r = j_{(m-2)/2,1}$ (it is clear that
the function $a^\bullet$ has similar properties). Combining the latter fact and the
first equality \eqref{MM}, we arrive at the following.

\vspace{2.6mm}

\begin{proposition}
Let $D$ be a domain in $\RR^m$, and let $u \in C^2 (D)$ do not vanish identically
and satisfy equation \eqref{Hh} in $D$ with $\lambda > 0$. If for some $x \in D$
there exists $B_{r_*} (x) \subset D$ with $r_* > j_{(m-2)/2,1} / \lambda$, then $u$
vanishes somewhere in $B_{r_*} (x)$. Moreover, the nodal set \[ N (u) = \{ y \in D :
u (y) = 0 \} \] is an $m-1$-dimensional hypersurface in $D$.
\end{proposition}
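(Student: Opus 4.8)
The plan is to establish the two assertions separately: the existence of a zero in $B_{r_*}(x)$ follows directly from the mean value formula \eqref{MM} combined with the sign change of $a^\circ$, whereas the structure of the nodal set rests on real analyticity of $u$ together with the elliptic equation \eqref{Hh} that it satisfies.

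For the first assertion I would argue by the intermediate value theorem. Fix $x$ as in the hypothesis; if $u(x)=0$ there is nothing to prove, so assume $u(x)\neq 0$ and, replacing $u$ by $-u$ if necessary, $u(x)>0$. Recall from the discussion preceding the proposition that $a^\circ$ decreases monotonically on $(0,j_{m/2,1})$, vanishes at $j_{(m-2)/2,1}<j_{m/2,1}$, and is hence strictly negative on the interval $(j_{(m-2)/2,1},j_{m/2,1})$. Since $r_*>j_{(m-2)/2,1}/\lambda$, I can pick $r$ with $j_{(m-2)/2,1}/\lambda<r<r_*$ so close to the left endpoint that $a^\circ(\lambda r)<0$. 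Because $\overline{B_r(x)}\subset B_{r_*}(x)\subset D$, the ball $B_r(x)$ is admissible, so \eqref{MM} gives $M^\circ(u,x,r)=a^\circ(\lambda r)\,u(x)<0$; consequently $u$ is negative at some $y_1\in\partial B_r(x)$. Since $u(x)>0>u(y_1)$ and the segment from $x$ to $y_1$ lies in $\overline{B_r(x)}\subset B_{r_*}(x)$, the intermediate value theorem produces a point of $B_{r_*}(x)$ where $u$ vanishes.

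For the second assertion I would use that $u$ is real analytic (as recalled in the Introduction) and that $u\not\equiv 0$ on the connected set $D$; hence $u$ vanishes to finite order at every point of $N(u)$, for otherwise $u$ would vanish identically near such a point and, by analytic continuation, on all of $D$. Fix $y_0\in N(u)$ and let $P_k$ be the lowest-order nonzero homogeneous term of the Taylor expansion of $u$ at $y_0$, of degree $k\geq 1$. Inserting this expansion into $\nabla^2 u=-\lambda^2 u$ and comparing the parts of degree $k-2$ gives $\nabla^2 P_k=0$, so $P_k$ is a nonzero harmonic homogeneous polynomial. If $k=1$, then $\nabla u(y_0)\neq 0$ and the implicit function theorem shows that $N(u)$ is an analytic hypersurface near $y_0$. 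If $k\geq 2$ (which forces $m\geq 2$, since for $m=1$ uniqueness for the ordinary differential equation makes every zero simple and $N(u)$ discrete), then $P_k$ is nonconstant; by the minimum principle for harmonic functions it changes sign, so its zero set is a real algebraic cone of dimension exactly $m-1$, and by the structure theory for nodal sets of second-order elliptic equations $N(u)$ near $y_0$ is $(m-1)$-dimensional, smooth off a relatively closed singular set of dimension at most $m-2$. Patching these local descriptions yields the stated conclusion.

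The main obstacle is this last step: describing $N(u)$ near a critical zero ($\nabla u(y_0)=0$) sharply enough to call it an $(m-1)$-dimensional hypersurface. A self-contained treatment would require a perturbation argument comparing $\{u=0\}$ with the cone $\{P_k=0\}$ in a neighbourhood of $y_0$, while an economical route is to invoke the known regularity and dimension results for nodal sets of elliptic equations. I would also record the convention that "hypersurface" is understood here in the sense of a set of Hausdorff dimension $m-1$ that is a smooth hypersurface outside a singular subset of dimension at most $m-2$, since genuine crossings may occur at critical zeros.
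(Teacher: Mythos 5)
Your proof of the first assertion is essentially the paper's: both arguments pick a radius $r$ just beyond $j_{(m-2)/2,1}/\lambda$ (and below $r_*$) so that $a^\circ(\lambda r)<0$, deduce $M^\circ(u,x,r)<0$ from \eqref{MM} when $u(x)>0$, and conclude by the intermediate value theorem along a radius, treating $u(x)\le 0$ by the obvious reductions. For the second assertion your route is genuinely different. The paper argues from the vanishing of the ball mean: $u(x_0)=0$ forces $M^\bullet(u,x_0,r)=0$ for every admissible ball, hence $u$ takes both signs in each such ball, and the nodal set is then declared to be an analytic $(m-1)$-dimensional hypersurface that "extends by analyticity" -- a rather terse appeal to real analyticity that does not address singular points of $N(u)$ at all. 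You instead carry out the standard local analysis: finite order of vanishing, the leading homogeneous Taylor term $P_k$ being harmonic (by comparing degree-$(k-2)$ parts in $\nabla^2 u=-\lambda^2 u$), the implicit function theorem at regular zeros, and the structure theory of nodal sets of second-order elliptic equations near critical zeros, together with an explicit convention for what "hypersurface" means (smooth of dimension $m-1$ off a singular set of dimension at most $m-2$). Your treatment is more careful and more honest about the only genuinely delicate point -- the behaviour of $N(u)$ where $\nabla u$ vanishes -- at the cost of invoking (or having to prove) the nodal-set regularity results; the paper's version is shorter but leaves exactly that point unproved. Both arguments are acceptable, and yours arguably closes a gap the paper leaves open.
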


\begin{proof}
If $u (x) = 0$, then a zero already exists and we denote it by $x_0$. Let $u (x) >
0$, and let $\delta > 0$ be sufficiently small and such that $r_+ = (j_{(m-2)/2,1} +
\delta) / \lambda < r_*$. Then $M^\circ (u, x, r_+) < 0$ because $a^\circ (\lambda
r_+) < 0$ in view of the behaviour of $J_{(m-2)/2} (\lambda r)$. Therefore, $u$
attains a negative value at some point on $\partial B_{r_+} (x) \subset B_{r_*}
(x)$. By continuity $u$ vanishes at some point $x_0 \in B_{r_*} (x)$ lying between
$x$ and the obtained point on $\partial B_{r_*} (x)$.

If $u (x) < 0$, then applying the same considerations to $-u$ we obtain the required
point $x_0$ such that $-u (x_0) = 0$.

Thus in all possible cases, there exists $x_0 \in B_{r_*} (x)$ such that $u (x_0) =
0$, and so $M^\bullet (u, x_0, r)$ vanishes provided $\overline {B_r (x_0)} \subset
D$. This implies that $B_r (x_0)$ is divided by an analytic $m-1$-dimensional
hypersurface into parts, where $u > 0$ and $u < 0$; indeed, $u$ is a real analytic
function in $D$ not vanishing identically there. By analyticity this hypersurface
extends from $B_r (x_0)$ to $D$.
\end{proof}

To illustrate Proposition~3.1, let us consider the function $u_+ (x) = |x|^{-1} \sin
\lambda |x|$. It satisfies equation \eqref{Hh} in $\RR^3$, and $N (u_+) =
\cup_{k=1}^\infty S_k$---the union of spheres $S_k = \partial B_{\pi k / \lambda}
(0)$---is its nodal set. Being positive in $B_{\pi / \lambda} (0)$, this function
demonstrates that the restriction $r_* > j_{(m-2)/2,1} / \lambda$ ($j_{(m-2)/2,1} /
\lambda = \pi / \lambda$ in this case) is essential. Moreover, $u_+$ easily provides
various domains enclosing balls of the radius $r_* > \pi / \lambda$, and so
containing parts of $S_k$ or the whole ones as its nodal sets in these domains. For
example, the domain $B_{(\pi + \epsilon) / \lambda} (0)$, where $\epsilon > 0$ is
sufficiently small, has the whole $S_1$ as the closed $N (u_+)$. On the other hand,
$N (u_+)$ consists of two pieces within $B_{(\pi + \epsilon) / \lambda} ((3 \pi / 2,
0, 0))$ ($\epsilon$ is the same as above): one lying on $S_1$ and the other one on
$S_2$. Of course, there is a plethora of more complicated examples.

We recall that mean value properties of harmonic functions have two important
consequences: the strong maximum principle and Liouville's theorem. The first
asserts that a function harmonic in a domain $D$ cannot have local maxima or minima
in $D$; moreover, if it is continuous in $\overline D$, which is bounded, then its
maximum and minimum are attained on $\partial D$; see \cite{M}, Chapter~11, \S~8.
The second theorem says that every harmonic function bounded below (or above) on
$\RR^m$ is a constant; see \cite{M}, Chapter~12, \S~4. Let us consider whether mean
value properties of solutions to equations \eqref{Hh} and \eqref{MHh+} imply
analogous theorems.

It is clear that the function $u_+$ violates both these assertions, and so they do
not hold for solutions of equation \eqref{Hh}. How this is related to the
inequalities $a^\circ (\lambda r) < 1$ and $a^\bullet (\lambda r) < 1$ for $\lambda
r > 0$ is an open question.

Another question is about analogous theorems for solutions of equation \eqref{MHh+}.
They are also not true as formulated above; indeed, the function $u_- (x) = |x|^{-1}
\sinh |x|$ satisfies this equation on $\RR^3$ (with $\mu = 1$), and has the local
(and global) minimum equal to one at the origin. Moreover, $u_-$ violates the
generalized Liouville theorem (see \cite{ABR}, p.~198), which guarantees that a
harmonic function is constant on $\RR^m$ under a weaker assumption than in
Liouville's theorem.

Thus some extra restrictions must be imposed in order to convert formulations of the
maximum principle and Liouville's theorem into true ones for solutions of equation
\eqref{MHh+}. It is easy to obtain them for the whole $\RR^m$, in which case, in
view of self-similarity, it is sufficient to restrict ourselves to the equation:
\begin{equation}
\nabla^2 v - v = 0 \ \ \mbox{in} \ \RR^m ; \label{MHh1}
\end{equation}
indeed, it follows from \eqref{MHh+} by the proper change of variables.

Let us consider the asymptotic behaviour of the function $\widetilde a^\circ (r)$ as
$r \to \infty$. The formula
\begin{equation}
I_\nu (z) = \frac{\E^z}{\sqrt{2 \pi z}} \left[ 1 + O (|z|^{-1}) \right] \, , \ \
|\arg z| < \pi /2 , \label{asym_I}
\end{equation}
whose principal term does not depend on $\nu$, is valid as $|z| \to \infty$; see
\cite{Wa}, p.~80. Combining \eqref{asym_I} and the second formula \eqref{a}, one
obtains
\[ \widetilde a^\circ (r) = \frac{\Gamma (m / 2) \, 2^{(m-3)/2}}{\sqrt \pi}
\frac{\E^r}{r^{(m-1)/2}} \left[ 1 + O (r^{-1}) \right] \ \ \mbox{as} \ r \to \infty \, .
\]
Now we are in a position to prove the following version of Liouville's theorem.

\begin{theorem}
Let $v$ be a solution of \eqref{MHh1} on $\RR^m$. If the inequality
\begin{equation}
| v (x) | \leq C (1 + |x|)^n \ \ holds \ for \ all \ x \in \RR^m \label{N}
\end{equation}
with some $C > 0$ and a nonnegative integer $n$, then $v$ vanishes identically.
\end{theorem}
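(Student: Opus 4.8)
The plan is to exploit the mean value equality for balls, $M^\bullet(v, x, r) = \widetilde a^\bullet(r)\, v(x)$, together with the exponential growth $\widetilde a^\bullet(r) \sim c_m\, \E^r / r^{(m+1)/2}$ as $r \to \infty$, which follows from \eqref{asym_I} and the second formula \eqref{a'} exactly as in the computation of $\widetilde a^\circ(r)$ carried out just above the statement. The key point is a mismatch of growth rates: if $v$ satisfies the polynomial bound \eqref{N}, then the left-hand side $M^\bullet(v, x, r)$ can grow at most polynomially in $r$ (for fixed $x$), whereas the coefficient $\widetilde a^\bullet(r)$ multiplying $v(x)$ grows exponentially. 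Forcing these to balance will compel $v(x) = 0$.

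I would proceed as follows. First, fix $x \in \RR^m$ and write out the ball mean value equality from the Corollary in the form $\widetilde a^\bullet(r)\, v(x) = |B_r|^{-1} \int_{B_r(x)} v(y)\, \D y$. Second, estimate the right-hand side: using \eqref{N} and $|y| \le |x| + r$ for $y \in B_r(x)$, one gets
\[
\bigl| M^\bullet(v, x, r) \bigr| \le C\,(1 + |x| + r)^n ,
\]
so the right-hand side is $O(r^n)$ as $r \to \infty$, with the implied constant depending on $x$. Third, record the asymptotics $\widetilde a^\bullet(r) = c_m\, \E^r r^{-(m+1)/2} [1 + O(r^{-1})]$ with $c_m = \Gamma(m/2+1)\, 2^{(m-1)/2} / \sqrt{\pi} > 0$ (obtained from \eqref{asym_I} and \eqref{a'} with $\nu = m/2$). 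Fourth, combine: for fixed $x$ we obtain
\[
|v(x)| \le \frac{|M^\bullet(v, x, r)|}{\widetilde a^\bullet(r)} \le \frac{C\,(1 + |x| + r)^n}{c_m\, \E^r r^{-(m+1)/2}\,[1 + O(r^{-1})]} \longrightarrow 0 \quad \text{as } r \to \infty ,
\]
since the numerator is polynomial in $r$ while the denominator grows exponentially. Hence $v(x) = 0$, and since $x$ was arbitrary, $v \equiv 0$.

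The argument is essentially routine once the two growth rates are in hand; there is no serious obstacle. The only point requiring a little care is the justification that $\widetilde a^\bullet(r)$ is eventually positive and bounded below by $c\,\E^r r^{-(m+1)/2}$ for large $r$ (so that dividing by it is legitimate and the bound is genuine) — but this is immediate from the asymptotic formula for $I_{m/2}$, since $I_\nu(z) > 0$ for $z > 0$ and the leading term in \eqref{asym_I} is strictly positive. One should also note that the bound \eqref{N} with the nonnegative integer $n$ is used only through the inequality $(1+|y|)^n \le (1 + |x| + r)^n$ on $B_r(x)$; no integrality of $n$ is actually needed, but stating it with integer $n$ does no harm. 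It is worth remarking in passing that, unlike the classical harmonic case, here \emph{any} polynomial growth — indeed, by the same argument, any growth slower than $\E^{r}$ along balls — already forces $v$ to vanish, which reflects the rigidity imposed by the term $-\mu^2 v$.
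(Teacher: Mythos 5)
Your proof is correct and follows essentially the same growth-mismatch argument as the paper, which uses the sphere mean value identity with $\widetilde a^\circ(r)\sim \mathrm{const}\cdot\E^r/r^{(m-1)/2}$ instead of the ball identity with $\widetilde a^\bullet(r)$; the two versions are interchangeable here. Your asymptotic constant and the positivity of $I_{m/2}$ on $(0,\infty)$ are both right, so the division step is legitimate and the conclusion follows exactly as in the paper.
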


\begin{proof}
Let us write the second formula \eqref{MM} in the form
\[ m \, \omega_m \widetilde a^\circ (r) \, v (x) = \int_{\partial B_1 (0)} \!\! v
(x + r y) \, \D S_y \, .
\]
Then \eqref{N} and the asymptotic formula for $\widetilde a^\circ (r)$ imply the
inequality
\[ | v (x) | \leq \widetilde C (1 + |x| + r)^n \frac{r^{(m-1)/2}}{\E^r} \ \ 
\mbox{for all} \ x \in \RR^m \ \mbox{and all} \ r > 0
\]
with some $\widetilde C > 0$. Letting $r \to \infty$, the required assertion
follows.
\end{proof}

Inequality \eqref{N} implies that a solution to equation \eqref{MHh1} is trivial
regardless that $n > 0$ can be taken arbitrarily large. On the other hand, if the
same inequality is imposed on a harmonic function, then it is a (harmonic)
polynomial, whose degree is less than or equal to $n$; see \cite{V}, p.~290.

The next assertion concerns the behaviour of $|v|$ for a nontrivial solution $v$ to
equation \eqref{MHh+}.

\begin{proposition}
Let $D$ be a domain in $\RR^m$, and let a nonvanishing identically function $v \in
C^2 (D)$ satisfy equation \eqref{MHh+} there. Then for every $x \in D$ there exists
$y \in D$ such that $|v (y)| > |v (x)|$.
\end{proposition}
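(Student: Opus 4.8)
The plan is to read the conclusion off the mean value equality for spheres \eqref{MM}, once one knows that the multiplier $\widetilde a^\circ$ strictly exceeds $1$ away from the origin. So the first step I would carry out is to establish that
\[ \widetilde a^\circ (\mu r) > 1 \quad \mbox{for every} \ r > 0 . \]
This is a short consequence of the second relation in \eqref{diff}: writing $\nu = (m-2)/2$, the right-hand side of $[z^{-\nu} I_\nu (z)]' = z^{-\nu} I_{\nu+1} (z)$ is positive for $z > 0$, so $z \mapsto z^{-\nu} I_\nu (z)$ is strictly increasing on $(0, \infty)$; since by the second formula \eqref{a} together with the behaviour of $I_\nu$ near the origin one has $\widetilde a^\circ (\mu r) \to 1$ as $r \to 0$, the claimed strict inequality follows. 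This is the same monotonicity already mentioned after the proof of the mean value theorem for spheres.

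The second step is a case analysis according to the sign of $v (x)$. Fix $x \in D$; since $D$ is open there is $r > 0$ with $\overline{B_r (x)} \subset D$, so that $\partial B_r (x)$ is an admissible sphere, and I use that a continuous function on the compact set $\partial B_r (x)$ attains there a value not smaller than its mean. If $v (x) > 0$, then by the second equality \eqref{MM} we get $M^\circ (v, x, r) = \widetilde a^\circ (\mu r)\, v (x) > v (x) > 0$, hence there is $y \in \partial B_r (x) \subset D$ with $v (y) \geq M^\circ (v, x, r) > v (x)$, and therefore $|v (y)| > |v (x)|$. If $v (x) < 0$, the same argument applied to $-v$, which also solves \eqref{MHh+}, gives $y \in D$ with $-v (y) > -v (x) > 0$, i.e. $|v (y)| > |v (x)|$. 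Finally, if $v (x) = 0$, the hypothesis that $v$ does not vanish identically in $D$ furnishes $y \in D$ with $|v (y)| > 0 = |v (x)|$, which completes the argument.

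I do not expect a genuine obstacle here: the only point needing some care is the strict inequality $\widetilde a^\circ (\mu r) > 1$ in the first step, after which only the elementary averaging argument above is used. It is worth stressing that the mean value formula for balls, the real analyticity of $v$, and any form of the maximum principle are all unnecessary. The statement should be seen as the correct substitute, for solutions of \eqref{MHh+}, of the maximum principle that fails in its classical form: $|v|$ cannot attain its supremum over $D$ at an interior point.
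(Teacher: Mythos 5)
Your proposal is correct and follows essentially the same route as the paper: the second equality \eqref{MM} combined with the strict inequality $\widetilde a^\circ (\mu r) > 1$ for $r>0$ (itself read off from the second relation \eqref{diff}) and the fact that a continuous function attains on an admissible sphere a value at least equal to its mean. The only difference is that you treat the case $v(x)=0$ explicitly via the nonvanishing hypothesis, a point the paper's argument passes over silently since there $M^\circ(v,x,r)=v(x)=0$ and the strict inequality $v(x)<M^\circ(v,x,r)$ fails.
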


\begin{proof}
Without loss of generality, we assume that $v (x) \geq 0$; indeed, $-v$ ought to be
considered instead of $v$ otherwise. Then we have that $M^\circ (v, x, r) \geq 0$
for an admissible sphere $\partial B_r (x)$ because $\widetilde a^\circ (\mu r) >
1$; moreover, $v (x) < M^\circ (v, x, r)$ by Theorem~2.1. The last inequality
implies that there exists $y \in \partial B_r (x) \subset D$ such that $v (y) > v
(x)$, which completes the proof.
\end{proof}

An immediate consequence of this proposition is the weak maximum principle for
solutions to equation \eqref{MHh+}; see \cite{GT}, \S~3.1, for the approach applicable to
general elliptic equations.

\begin{theorem}
Let $D$ be a bounded domain in $\RR^m$. If $v \in C^2 (D) \cap C^0 (\overline D)$
satisfies equation \eqref{MHh+} in $D$, then
\begin{equation}
\sup_{x \in D} | v (x) | = \max_{x \in \partial D} | v (x) | \, . \label{wmp}
\end{equation}
\end{theorem}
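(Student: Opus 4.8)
The plan is to derive \eqref{wmp} as a direct corollary of Proposition~3.2 together with the compactness of $\overline D$ and the continuity of $|v|$ on $\overline D$. First I would dispose of the trivial case: if $v \equiv 0$ in $D$, then by continuity $v \equiv 0$ on $\overline D$ and both sides of \eqref{wmp} equal zero. So from now on assume $v$ does not vanish identically in $D$.

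Next, since $\overline D$ is compact and $|v|$ is continuous on $\overline D$, the supremum $S = \sup_{x\in D}|v(x)|$ is finite and the maximum $\max_{x\in\partial D}|v(x)|$ is attained; call the latter $S_\partial$. Clearly $S_\partial \le \sup_{x\in\overline D}|v(x)|$, and because $D$ is dense in $\overline D$ and $|v|$ is continuous, $\sup_{x\in\overline D}|v(x)| = S$. Hence $S_\partial \le S$, and it remains to prove $S \le S_\partial$, i.e.\ that $|v|$ cannot exceed $S_\partial$ anywhere. Again by continuity and compactness, there is a point $x_0 \in \overline D$ with $|v(x_0)| = S$. The crux is to show $x_0 \in \partial D$. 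Suppose instead $x_0 \in D$. Then Proposition~3.2 applies at $x_0$: there exists $y \in D$ with $|v(y)| > |v(x_0)| = S$, contradicting the definition of $S$ as the supremum of $|v|$ over $D$. Therefore $x_0 \in \partial D$, which gives $S = |v(x_0)| \le S_\partial$, and combined with the reverse inequality we conclude $S = S_\partial$, which is \eqref{wmp}.

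The main (and essentially only) obstacle is the bookkeeping at the boundary: one must be careful that $v$ is only assumed $C^2$ in the open set $D$, so Proposition~3.2—which requires the equation to hold in a \emph{domain}—can only be invoked at interior points, and one needs the continuity of $v$ up to $\overline D$ merely to transfer the supremum over $D$ to an attained maximum over $\overline D$. No new estimate is required; the monotonicity fact $\widetilde a^\circ(\mu r) > 1$ for $\mu r > 0$ already did the real work inside Proposition~3.2. I would also remark that, unlike the harmonic case, one cannot upgrade this to a strong maximum principle: the example $u_-(x) = |x|^{-1}\sinh|x|$ discussed above has an interior minimum of $|v|$, so only the ``$\sup = \max_{\partial D}$'' form survives for equation \eqref{MHh+}.
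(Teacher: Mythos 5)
Your argument is correct and follows essentially the same route as the paper: both obtain a point $x_0 \in \overline D$ where $|v|$ attains its supremum (the paper via a maximizing sequence, you via compactness directly) and then invoke Proposition~3.2 to exclude $x_0 \in D$. Your explicit treatment of the trivial case $v \equiv 0$ and of the inequality $\max_{\partial D}|v| \le \sup_D |v|$ only makes the bookkeeping slightly more complete than the paper's.
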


\begin{proof}
In the case of nonvanishing identically function $v$, we take a sequence
$\{x_k\}_{k=1}^\infty \subset D$ such that $|v (x_k)| \to \sup_{x \in D} | v (x) |$
as $\ k \to \infty$. Since $D$ is bounded, $\{x_k\}_{k=1}^\infty$ has a limit point
in $\overline D$, say $x_0$, and $|v (x_0)| = \sup_{x \in D} | v (x) |$ by
continuity. Moreover, $x_0 \in \partial D$; indeed, if $x_0 \in D$, then
Proposition~3.2 implies that there exists $y \in D$ such that $|v (y)| > |v (x_0)|$,
which is impossible. Now \eqref{wmp} follows from the equality $\sup_{x \in D} | v
(x) | = \max_{x \in \overline D} | v (x) |$ valid for $v \in C^0 (\overline D)$.
\end{proof}

An immediate consequence of this theorem (see \cite{NC}, p.~260, for the original
formulation) is the uniqueness of a solution to the Dirichlet problem for equation
\eqref{MHh+} in a bounded domain as well as the continuous dependence of solutions
to this problem on boundary data.

\vspace{-3mm}

{\centering \section{A converse of Theorem 2.1 concerning equation (2.4)} }

In the recent paper \cite{Ku1}, it is shown that if the first (second) equality
\eqref{MM} is valid for every $x \in D$ and for all $r \in (0, r (x))$, where $B_{r
(x)} (x)$ is admissible, then $u$ ($v$, respectively) is a solution of the Helmholtz
(modified Helmholtz, respectively) equation. Thus, mean value properties
characterize solutions of these equations in the same way as the Koebe theorem
characterizes harmonic functions; see the classical Kellogg's book \cite{K},
pp.~226, 227. The next step was made by Kellogg; his paper \cite{K1} of 1934
initiated a long series of publications dealing with the so-called restricted mean
value properties in order to characterize harmonicity. Baxter generalized Kellogg's
result in his note \cite{B1} and introduced the following definition of the
restricted properties.


\begin{definition}
A real-valued function $f$ defined on an open $G \subset \RR^m$ is said to have the
restricted mean value property with respect to balls (spheres) if for each $x \in G$
there exists a single ball (sphere) centred at $x$ of radius $r (x)$ such that $B_{r
(x)} (x) \subset G$ and the average of $f$ over this ball (its boundary) is equal
to $f (x)$.
\end{definition}

Some applications of these properties and their further generalizations used in the
theory of harmonic functions were outlined in the survey paper \cite{Ku2}, \S~3. In
the present note, Definition~4.1 is amended in order to accommodate it for
characterizing solutions of the modified Helmholtz equation.


\begin{definition}
A real-valued function $f$ defined on an open $G \subset \RR^m$ is said to have the
modified restricted mean value property with respect to spheres if for each $x \in
G$ there exists a single sphere centred at $x$ of radius $r (x)$ such that $B_{r
(x)} (x) \subset G$ and the second equality \eqref{MM} with $r = r (x)$ holds for
$f$.
\end{definition}

\begin{theorem}
Let $D \subset \RR^m$ be a bounded domain such that the Dirichlet problem for the
modified Helmholtz equation is soluble in $C^2 (D) \cap C^0 (\overline D)$ for
every continuous function given on $\partial D$. If $v \in C^0 (\overline D)$ has
the modified restricted mean value property in $D$ with respect to spheres, then $v$
satisfies equation \eqref{MHh+} in $D$.
\end{theorem}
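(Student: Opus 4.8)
The plan is to adapt the classical argument by which the restricted mean value property for harmonic functions is upgraded to harmonicity (the Kellogg--Baxter circle of ideas), using the solubility of the Dirichlet problem for \eqref{MHh+} as the substitute for the Poisson-integral machinery. First I would let $\phi = v|_{\partial D}$ and invoke the hypothesis to produce the unique $\widehat v \in C^2 (D) \cap C^0 (\overline D)$ solving \eqref{MHh+} in $D$ with $\widehat v = \phi$ on $\partial D$; uniqueness is guaranteed by Theorem~3.2. By Theorem~2.1 the solution $\widehat v$ satisfies the second equality \eqref{MM} for \emph{every} admissible sphere, in particular for the distinguished radius $r(x)$ attached to each $x$ by the modified restricted mean value property. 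Setting $g = v - \widehat v$, one has $g \in C^0 (\overline D)$, $g = 0$ on $\partial D$, and for every $x \in D$
\[
\widetilde a^\circ (\mu\, r(x))\, g(x) = M^\circ (g, x, r(x)) \, ,
\]
obtained by subtracting the genuine mean value identity for $\widehat v$ from the assumed one for $v$. The goal is to show $g \equiv 0$, which gives $v = \widehat v$ and hence $v$ solves \eqref{MHh+} in $D$.

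To force $g \equiv 0$ I would run a maximum-type argument on the auxiliary function $h = g / \widetilde a^\circ(\mu \cdot)$ is not quite right since the radius varies, so instead I work directly with $g$. Suppose $\sup_{\overline D} g = s > 0$ (the case of a negative infimum is handled by replacing $g$ with $-g$). Pick a point $x_0 \in \overline D$ where $g(x_0) = s$. Since $g$ vanishes on $\partial D$ and $s>0$, we have $x_0 \in D$, and then, writing $\kappa = \widetilde a^\circ(\mu\, r(x_0)) > 1$ (strict, by the remark following Theorem~2.1 that $I_{(m-2)/2}$ makes $\widetilde a^\circ$ exceed unity for positive argument),
\[
\kappa\, s = \kappa\, g(x_0) = M^\circ (g, x_0, r(x_0)) \leq s \, ,
\]
because the spherical average of $g$ cannot exceed the maximum $s$. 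This yields $\kappa \leq 1$, contradicting $\kappa > 1$ unless $s \leq 0$. Hence $\sup_{\overline D} g \leq 0$; applying the same to $-g$ gives $\inf_{\overline D} g \geq 0$, so $g \equiv 0$, as required.

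The main obstacle is the one I have just finessed: the distinguished radius $r(x)$ is only assumed to satisfy $B_{r(x)}(x) \subset D$, so at the maximizing point $x_0$ the sphere $\partial B_{r(x_0)}(x_0)$ is admissible and the subtraction of the two mean value identities is legitimate; one must check that $r(x_0) > 0$, which follows because for $r(x_0) = 0$ the ``property'' degenerates to the trivial identity $v(x_0) = v(x_0)$ and carries no information — but Definition~4.2 asks for a genuine sphere, i.e. $r(x) > 0$, so this is part of the hypothesis. A secondary point worth spelling out is that the inequality $M^\circ(g, x_0, r(x_0)) \leq s$ is in fact strict unless $g \equiv s$ on all of $\partial B_{r(x_0)}(x_0)$; combined with $\kappa > 1$ this already gives the contradiction even in the borderline case, so no continuity-of-$r(x)$ or connectedness argument is needed beyond what is stated. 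The hypothesis that $D$ be a \emph{domain} (connected) is not actually used in this form of the argument, though it is natural to keep it for consistency with the rest of the paper.
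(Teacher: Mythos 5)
Your proposal is correct and follows essentially the same route as the paper: subtract the Dirichlet solution $v_0$ with the same boundary trace, and use the fact that $\widetilde a^\circ(\mu r)>1$ for $r>0$ to run a maximum-principle argument showing $v-v_0\equiv 0$ (the paper packages this as the weak maximum principle of Theorem~3.2 applied to $v-v_0$). The only cosmetic difference is that you conclude directly from $v=v_0$ that $v$ solves \eqref{MHh+}, whereas the paper additionally (and redundantly at that point) invokes the converse theorem from \cite{Ku1}.
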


\begin{proof}
First, let us show that the theorem's assumptions yield that
\begin{equation}
\max_{x \in \overline D} |v (x)| = \max_{x \in \partial D} |v (x)| \, . \label{max}
\end{equation}
Reasoning by analogy with the proof of Proposition~3.2, we see that the modified
restricted mean value property implies that for every $x \in D$ there exists $y \in
D$ such that $|v (y)| > |v (x)|$. Then the considerations used in the proof of
Theorem~3.2 yield \eqref{max}.

Let $f$ denote the trace of $v$ on $\partial D$; then there exists $v_0 \in C^0
(\overline D)$ solving the Dirichlet problem for equation \eqref{MHh+} in $D$ with
$f$ as the boundary data. Hence $v_0$ satisfies the second equality \eqref{MM} for
all $x \in D$ and all admissible $\partial B_r (x)$, and so the modified restricted
mean value property is valid for $v - v_0$. Then the weak maximum principle
\eqref{max} holds for $v - v_0$, thus implying that $v \equiv v_0$ in $D$ because $v
\equiv v_0$ on $\partial D$. Then $v$ also satisfies the second equality \eqref{MM}
for all $x \in D$ and all admissible $\partial B_r (x)$. Now, Theorem~7 formulated
in \cite{Ku1} yields that $v$ is a solution of equation \eqref{MHh+} in $D$.
\end{proof}

Since the proof of the latter theorem is omitted in \cite{Ku1}, we include it in
this note for the sake of completeness.

\begin{theorem}
Let $D$ be a bounded domain in $\RR^m$. If a real-valued $v \in C^0 (D)$ satisfies
the second equality \eqref{MM} with some $\mu > 0$ for every $x \in D$ and all $r
\in (0, r (x))$, where the ball $B_{r (x)} (x)$ is admissible, then $v$ is a
solution of equation \eqref{MHh+} in $D$.
\end{theorem}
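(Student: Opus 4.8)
The plan is to mimic the classical converse proof for the mean value characterization of harmonic functions (the Koebe-type argument), adapted to the radial weight $\widetilde a^\circ(\mu r)$. First I would fix an arbitrary point $x_0 \in D$ and a closed admissible ball $\overline{B_R(x_0)} \subset D$, and solve the Dirichlet problem for equation \eqref{MHh+} on $B_R(x_0)$ with boundary data $v|_{\partial B_R(x_0)}$; call the solution $v_0 \in C^2(B_R(x_0)) \cap C^0(\overline{B_R(x_0)})$. (Solubility on a ball is standard, e.g.\ via the explicit series in modified Bessel functions times spherical harmonics, or via the general elliptic theory in \cite{GT}.) By Theorem~2.1 applied to $v_0$, the function $v_0$ satisfies the second equality \eqref{MM} for every $x\in B_R(x_0)$ and every admissible sphere there. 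Hence the difference $h = v - v_0$ also satisfies the second equality \eqref{MM} for all $x \in B_R(x_0)$ and all $r \in (0, r(x))$ with $B_{r(x)}(x) \subset B_R(x_0)$, and $h$ vanishes on $\partial B_R(x_0)$.

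The heart of the argument is then a maximum-principle-type contradiction showing $h \equiv 0$ on $B_R(x_0)$. Suppose $M = \sup_{B_R(x_0)} h > 0$ (if not, apply the same reasoning to $-h$; if both suprema are $\le 0$ we are done). Let $E = \{x \in \overline{B_R(x_0)} : h(x) = M\}$; since $h\in C^0$ and $\overline{B_R(x_0)}$ is compact, $E$ is nonempty and closed, and since $h = 0 < M$ on $\partial B_R(x_0)$ we have $E \subset B_R(x_0)$. Pick $x_* \in E$ with distance to $\partial B_R(x_0)$ realized (or simply any $x_* \in E$) and use the mean value identity at $x_*$ with its radius $r(x_*)$:
\[
M = h(x_*) = \frac{1}{\widetilde a^\circ(\mu r(x_*))}\, M^\circ(h, x_*, r(x_*)) \le \frac{1}{\widetilde a^\circ(\mu r(x_*))}\, M .
\]
Since $\widetilde a^\circ(\mu r) > 1$ for $r > 0$ (noted in \S2.2 via the second formula \eqref{diff}), this forces $M \le M/\widetilde a^\circ(\mu r(x_*)) < M$, a contradiction unless $M \le 0$. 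The same reasoning applied to $-h$ gives $\inf h \ge 0$, so $h \equiv 0$, i.e.\ $v \equiv v_0$ on $B_R(x_0)$; thus $v$ solves \eqref{MHh+} on $B_R(x_0)$, and since $x_0$ was arbitrary, $v$ solves \eqref{MHh+} throughout $D$.

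The main obstacle — and the point needing care — is that the restricted mean value identity at $x_*$ only holds for the single radius $r(x_*)$, not for all small radii, so one cannot directly conclude $h \equiv M$ on a whole ball and then propagate. The clean fix is the sign argument above: because the weight strictly exceeds $1$, equality $h(x_*) = M$ at an interior maximizer is already contradictory, so no propagation is needed — one only needs that the supremum (if positive) is attained in the interior, which follows from $h \le 0$ on the boundary together with compactness. A secondary point to check is that $v_0 \in C^0(\overline{B_R(x_0)})$ with $v_0 = v$ on $\partial B_R(x_0)$ really lets us transfer the restricted property to $h$; this is immediate since $M^\circ$ is linear in the function. One should also confirm that the radius function $r(\cdot)$ from Definition~4.2 need not be continuous or bounded below for this argument — it is not used beyond the pointwise identity — so the proof goes through for any such $r(x)$ with $B_{r(x)}(x) \subset D$ (and in particular $B_{r(x)}(x) \subset B_R(x_0)$ once we restrict attention to points where that holds; for points near $\partial B_R(x_0)$ whose prescribed radius leaves $B_R(x_0)$ one simply does not use the identity, which is harmless since the boundary values already pin down $h$ there).
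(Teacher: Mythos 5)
Your proof is correct for the theorem as stated, but it follows a genuinely different route from the paper's. The paper first proves that $v$ is smooth: it multiplies the spherical identity by a mollifier $\omega_\epsilon(r)$ and integrates in $r$, representing $v(x)$ as $c(\mu,\epsilon)^{-1}\int_D v(y)\,\omega_\epsilon(|y-x|)\,\D y$ with $c(\mu,\epsilon)>0$ (positivity again resting on $\widetilde a^{\,\circ}(\mu r)>1$), so that $v\in C^\infty(D)$; it then integrates the spherical identity to the ball identity, applies the Laplacian and Green's first formula together with the differentiation formula \eqref{diff}, and concludes that $\int_{B_r(x)}[\nabla^2 v-\mu^2 v]\,\D y=0$ for all small $r$, whence the equation holds pointwise by continuity. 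You instead compare $v$ with the Dirichlet solution $v_0$ on a ball and annihilate the difference $h=v-v_0$ by a maximum argument driven by $\widetilde a^{\,\circ}(\mu r)>1$. This buys $C^2$ regularity for free from $v_0$ and avoids mollification entirely, at the price of invoking solubility of the Dirichlet problem for \eqref{MHh+} on balls with continuous data (standard, since the zeroth-order coefficient $-\mu^2$ is negative). Your scheme is in fact much closer to the paper's proof of Theorem~4.1 than to its proof of the present theorem.

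One caveat: you have read the hypothesis as the restricted (single-radius) property, whereas the theorem assumes the identity for \emph{all} $r\in(0,r(x))$. Under the actual hypothesis your argument closes cleanly: at an interior maximizer $x_*$ of $h$ one takes any $r<\min\bigl(r(x_*),\,\mathrm{dist}(x_*,\partial B_R(x_0))\bigr)$, for which both $v$ and $v_0$ satisfy the identity, and the contradiction $M\,\widetilde a^{\,\circ}(\mu r)\leq M$ with $M>0$ follows. But your closing patch for the single-radius scenario --- that for points whose prescribed sphere leaves $B_R(x_0)$ ``the boundary values already pin down $h$ there'' --- would not work if only one radius per point were available: an interior maximizer whose unique prescribed sphere exits $B_R(x_0)$ yields no usable identity for $h$, and boundary values do not determine $h$ at interior points. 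For the theorem actually stated this is moot, so your proof stands once ``its radius $r(x_*)$'' is replaced by a sufficiently small admissible radius.
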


\begin{proof}
First, let is necessary to show that $v$ is smooth and for this purpose we use the
method applied by Mikhlin in his proof of the corresponding theorem for harmonic
functions; see \cite{M}, Chapter~11, \S~7.

Let $D'$ be a subdomain of $D$ whose closure $\overline{D'} \subset D$ is separated
from $\partial D$ by a layer formed by parts of balls located within $D$; each of
these balls has its centre on $\partial D$ and radius $2 \epsilon$. By
$\omega_\epsilon (|y - x|) = \omega_\epsilon (r)$ we denote the mollifier considered
in \cite{M}, Chapter~1, \S~1. Let $x \in D'$, then, multiplying the second equality
\eqref{MM} by $\omega_\epsilon (r)$, we obtain
\[ v (x) \, \widetilde a^\circ (\mu r) \, |\partial B_r| \, \omega_\epsilon (r) = 
\omega_\epsilon (r) \int_{\partial B_r (x)} \!\! v (y) \, \D S_y \, ,
\]
where $\widetilde a^\circ$ is defined by \eqref{a}. Integration with respect to $r$
over $(0, \epsilon)$ yields
\[ v (x) \, c (\mu, \epsilon) = \int_{B_\epsilon (x)} \!\! v (y) \, 
\omega_\epsilon (|y - x|) \, \D y = \int_{D} \!\! v (y) \, \omega_\epsilon (|y - x|)
\, \D y \, .
\]
Here the last equality follows from the fact that $x \in D'$, whereas
$\omega_\epsilon (|y - x|)$ vanishes outside $B_\epsilon (x)$. Moreover,
\[ c (\mu, \epsilon) = \int_0^\epsilon \widetilde a^\circ (\mu r) \, |\partial
B_r| \, \omega_\epsilon (r) \, \D r > 0 \, ,
\]
because $\widetilde a^\circ (\mu r) > 1$. Since $\omega_\epsilon$ is infinitely
differentiable, the obtained representation shows that $v \in C^\infty (D')$. By
taking $\epsilon$ arbitrarily small, we see that $v \in C^\infty (D)$.

Now we are in a position to show that $v$ is a solution of equation \eqref{MHh+} in
$D$. Let $x \in D$ and let $r (x) > 0$ be such that $B_{r (x)} (x)$ is admissible.
Since the second equality \eqref{MM} holds for all $r \in (0, r (x))$, for any such
$r$ the second equality \eqref{MM'} holds as well (it follows from \eqref{MM} by
integration). Applying the Laplacian to the integral on the left-hand side of the
second equality \eqref{MM'}, we obtain
\[ \int_{|y| < r} \!\! \nabla^2_x \, v (x+y) \, \D y = \int_{|y| = r} \!\! \nabla_x \,
v (x+y) \cdot \frac{y}{r} \, \D S_y \, .
\]
Here the equality is a consequence of Green's first formula. By changing variables
this can be written as follows:
\[ r^{m-1} \frac{\partial}{\partial r} \int_{|y|=1} \!\! v (x + r y) \, 
\D S_y = |\partial B_1 (0)| r^{m-1} \frac{\partial}{\partial r} M^\circ (v, x, r) \, ,
\]
where $M^\circ (v, x, r) = \widetilde a^\circ (\mu r) \, v (x)$. In view of the
second equality \eqref{diff}, we have that
\[ \frac{\partial}{\partial r} M^\circ (v, x, r) = - \frac{\mu I_{m/2} (\mu r)}
{(\mu r / 2)^{(m-2)/2}} \, v (x) \, .
\]
Combining the above considerations and the second equality \eqref{MM'}, we conclude
that for every $x \in D$ the equality
\[ \int_{|y| < r} \!\! [ \nabla^2_x \, v - \mu^2 v ] \, (x+y) \, \D y = 0 \ \ 
\mbox{holds for all} \ r \in (0, r (x)) \, .
\]
Thus, in each $B_r (x)$ there exists $y (r, x)$ such that $[ \nabla^2 \, v - \mu^2 v
] \, (y (r, x)) = 0$. Since $y (r, x) \to x$ as $r \to 0$, we obtain by continuity
that $v$ satisfies the modified Helmholtz equation at every $x \in D$.
\end{proof}

The question about domains in which the Dirichlet problem for an elliptic equation
is soluble has a long history which goes back to George Green's \textit{Essay on the
Application of Mathematical Analysis to the Theories of Electricity and Magnetism}
published in 1828. This problem for the Laplace equation was posed in it for the
first time. The final answer when the Dirichlet problem for harmonic functions has a
solution was given by Wiener \cite{NW} in 1924; the notion of capacity was
introduced for this purpose.

The class of domains such that the Dirichlet problem is soluble is the same for the
modified Helmholtz equation and for the Laplace equation. This follows from the
results of Oleinik \cite{O} and Tautz \cite{T}; they demonstrated independently and
published in 1949 that this fact about the solubility of the Dirichlet problem is a
common characteristic which is true for elliptic equations of rather general form
(see the monograph \cite{Mi}, Chapter~IV, \S~28, for a review of related papers).

\vspace{-12mm}

\renewcommand{\refname}{
\begin{center}{\Large\bf References}
\end{center}}
\makeatletter
\renewcommand{\@biblabel}[1]{#1.\hfill}
\makeatother

\end{document}